\theoremstyle{plain}
\newtheorem{theorem}{Theorem}[section]
\newtheorem{proposition}[theorem]{Proposition}
\newtheorem{lemma}[theorem]{Lemma}
\theoremstyle{remark}
\theoremstyle{definition}
\newcommand{\la}{\langle}
\newcommand{\ra}{\rangle}
\newcommand{\R}{{\mathbb{R}}}
\newcommand{\C}{{\mathbb{C}}}
\newcommand{\comment}[1]{}
\begin{document}

\title[The Cartan geometry of the rotating Kepler problem]
{The Cartan geometry of the rotating Kepler problem}
\author{Kai Cieliebak}
\author{Urs Frauenfelder}
\author{Otto van Koert}
\address{
    Kai Cieliebak\\
    Ludwig-Maximilians-Universit\"at, 80333 M\"unchen, Germany}
    \email{kai@math.lmu.de}
\address{
    Urs Frauenfelder\\
    Department of Mathematics and Research Institute of Mathematics\\
    Seoul National University}
\email{frauenf@snu.ac.kr}
\address{
    Otto van Koert\\
    Department of Mathematics and Research Institute of Mathematics\\
    Seoul National University}
\email{okoert@snu.ac.kr} \keywords{Finsler geometry, Flag curvature,
3-body problem}

\begin{abstract}
We investigate the Cartan and Finsler geometry of the rotating Kepler problem, a limit case of the restricted three body problem that arises if the mass of the one of the primaries goes to zero.
We show that the Hamiltonian for the rotating Kepler problem can be regarded as the Legendre transform of a certain family of Finsler metrics on the two-sphere.
For very negative energy levels, these Finsler metrics are close to the round metric, and the associated flag curvature is hence positive.
On the other hand, we show that the flag curvature can become negative once the energy level becomes sufficiently high. 
\end{abstract}

\maketitle

\section{Introduction}

The Kepler problem in a rotating frame arises as the limit of the
restricted three body problem when the mass of one of the primaries
goes to zero. Energy hypersurfaces of the Kepler problem as well as
the restricted three body problem are noncompact. If one focuses on
the bounded components of the energy hypersurfaces, i.e.~one excludes
the possibility of escape to infinity, then the only obstruction to
compactness comes from collisions. However, it is well known in
celestial mechanics that two body collisions can always be
regularized. A particularly nice way to regularize the Kepler problem
in an inertial frame was discovered by Moser in \cite{M}. By
interchanging the roles of position and momenta Moser shows that the
regularized planar Kepler problem is equivalent to the geodesic flow
on the round two-sphere. Indeed, the point at infinity of the
two-sphere corresponds to collision where the momenta explode. In
\cite{AFKP} it was shown that for energies not too large the compact
components of the regularized energy hypersurfaces for the planar
restricted 3-body problem are starshaped. Here starshaped refers to
the new cotangent bundle structure obtained after interchanging the
roles of position and momenta; with respect to the old $(q,p)$-coordinates the Liouville vector field is given by
$q\frac{\partial}{\partial q}$ and not by $p\frac{\partial}{\partial
p}$.
The challenging new question is now if starshaped can be
replaced by the stronger condition of fiberwise convexity. If the energy
hypersurfaces are fiberwise convex we obtain a Finsler metric after Legendre
transformation and the integral curves of the restricted 3-body
problem can be interpreted as geodesics for the Finsler metric. The
relevance of this problem comes from its relation with the project
initiated by Helmut Hofer to construct finite energy foliations for
the restricted 3-body problem. Indeed, if the energy hypersurfaces
are fiberwise convex then the Conley-Zehnder indices of the closed
characteristics correspond to the Morse indices of the corresponding
geodesics and hence are nonnegative. Our first result is the following.
\\ \\
\textbf{Theorem~A:} \emph{The bounded components of the regularized
planar Kepler problem in a rotating frame are fiberwise convex.}
\\ \\
In particular, the regularized Hamiltonian of the rotating Kepler
problem is Legendre dual to a Finsler metric. The Legendre dual of a
Finsler space was recognized by Miron as a {\em Cartan space}, see
\cite{MHSS} and the literature cited therein. Therefore, Theorem~A
provides the rotating Kepler problem with a Cartan structure. The
sectional curvature of Riemannian geometry generalizes to the notion
of flag curvature in Cartan, respectively Finsler geometry. In
particular, for a Cartan surface the flag curvature is a function on
the unit tangent bundle of the surface. This differs from the
Riemannian case where the sectional curvature for a surface reduces
to the Gaussian curvature and hence is given by a function on the
surface itself. 
Since in our case the underlying surface is a sphere,
the flag curvature is a function on $\mathbb{R}P^3$ regarded as
the unit tangent bundle of the sphere. 
However, because the Kepler
problem is invariant under rotations, the flag curvature is
invariant under a circle action and quotienting out this circle
action we again get a function on a sphere. However, be aware that
this sphere does not coincide with the original one. The importance
of the flag curvature comes from its relation with the Jacobi
equation and hence with questions about indices, see \cite{BCS}. Our
next result is rather surprising since by Moser's result in an
inertial frame, the regularized Kepler problem is equivalent to the
geodesic flow of the round two sphere whose curvature is hence always positive.
\\ \\
\textbf{Theorem~B:} \emph{For the rotating Kepler problem the flag
curvature can become negative.}
\\ \\
Let us explain our motivation for studying this problem.
As mentioned above, the rotating Kepler problem is a limiting case of the restricted $3$-body problem.
Conley first proposed to use hyperbolicity in the restricted $3$-body problem in mission design, \cite{conley1,conley2,conley3}.
Suppose we have the following problem.
Given a satellite on an orbit around the earth, is it possible to move this satellite to an orbit around the moon using the gravitational forces of the earth, moon and maybe the sun without spending much fuel.
It was Belbruno who found the first realistic orbit using this idea, \cite{belbruno1}.
A variation of this orbit was succesfully put into practice to save the first Japanese lunar mission, \cite{belbruno-miller}.

Conley's idea takes advantage of a hyperbolic orbit around the first Lagrange point, also known as a Lyapunov orbit.
These Lyapunov orbits only appear for sufficiently high energy.

During a discussion with P.~Albers, E.~Belbruno, J.~Fish and H.~Hofer at the IAS in Princeton, the question arised whether (short) hyperbolic orbits can already arise below the first Lagrange point.
For very small masses of the moon or for very negative energies it is known that this phenomenon never happens.
Otherwise this question is completely open.

A possibility to locate the position of hyperbolic orbits in phase space would be to find regions of negative curvature of the Finsler metric.
Indeed, \cite{HP} contains a short computation which illustrates the relation between the flag curvature and the Conley-Zehnder index.
In particular, one can clearly see that if the flag curvature is negative along an entire orbit, then this orbit is hyperbolic.
We therefore hope that the methods studied in this paper have some applications for mission design.

We conclude the introduction by pointing out that one could also try to understand the Conley-Zehnder indices by investigating whether there exists a convex embedding of a level set of the restricted $3$-body problem into $\C^2$.
The latter question has been answered in the affirmative in \cite{AFFHK} provided the energy levels are sufficiently far below the first Lagrange value.

\section{A family of Minkowski metrics}

For $p=(p_1,p_2)\in\R^2$ and $C>0$ set $p^\perp=(p_2,-p_1)$ and
consider the function $H_p:\R^2-0\to\R$, 
$$
   H_p(q) := \la p^\perp,q\ra - \frac{1}{|q|}+2C.
$$
\begin{lemma}\label{lem:convex}
Suppose $|p|<C^2$. Then the curve  
$H_p^{-1}(0)$ has two connected components: an unbounded one in the region
$\{|q|>1/C\}$, and a bounded component $\Sigma_p$ in the region
$\{|q|<1/C\}$ which bounds a strictly convex domain containing the origin.
\end{lemma}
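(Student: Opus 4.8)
The plan is to pass to polar coordinates, extract the two components of $H_p^{-1}(0)$ explicitly, and then reduce the strict convexity of the bounded one to a single pointwise Hessian inequality along it. First I would write $q=r\om$ with $r=|q|>0$ and $\om\in S^1$, and set $a=a(\om):=\la p^\perp,\om\ra$, so that $|a|\le|p|<C^2$. Multiplying $H_p(r\om)=0$ by $r$ turns it into the quadratic $a\,r^2+2Cr-1=0$, whose discriminant $4(C^2+a)$ is positive by hypothesis; its positive roots are $r=\rho(\om):=\bigl(C+\sqrt{C^2+a}\bigr)^{-1}$ for every $\om$, together with $r=\bigl(C-\sqrt{C^2+a}\bigr)^{-1}$ when $a<0$. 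Since $C^2+a$ stays in a fixed compact subinterval of $(0,\infty)$, the function $\rho$ is smooth and positive on $S^1$, and one reads off $\rho<1/C$ while $\bigl(C-\sqrt{C^2+a}\bigr)^{-1}>1/C$. This exhibits $H_p^{-1}(0)$ as the disjoint union of the radial graph $\Sigma_p=\{\rho(\om)\,\om:\om\in S^1\}\subset\{|q|<1/C\}$, a smooth embedded closed curve, and, for $p\ne 0$, an unbounded curve in $\{|q|>1/C\}$ parametrized by the arc $\{a<0\}$ (for $p=0$ this second piece is empty and $\Sigma_p$ is the round circle of radius $1/(2C)$). Being a radial graph of a positive function, $\Sigma_p$ bounds the topological disk $D_p=\{r\om:0\le r<\rho(\om)\}$, which contains the origin.

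Next I would set up the convexity criterion and compute. On $\Sigma_p$ one has $|q|^{-1}=C+\sqrt{C^2+a}>C$, hence $|\nabla H_p(q)|\ge|q|^{-2}-|p|>C^2-|p|>0$, so $\Sigma_p$ is a regular level set and the strict convexity of $D_p$ is equivalent to positivity of the second fundamental form of $\Sigma_p=H_p^{-1}(0)$ with respect to the outward normal $\nabla H_p$; that is, to $\Hess H_p(q)(v,v)>0$ for every $q\in\Sigma_p$ and every nonzero $v$ tangent to $\Sigma_p$ at $q$, i.e.\ $v$ proportional to $(\nabla H_p(q))^\perp$. The linear summand of $H_p$ does not contribute to the Hessian, so $\Hess H_p(q)=|q|^{-3}\bigl(\id-3\,\hat q\hat q^{\,T}\bigr)$ with $\hat q=q/|q|$, whence $\Hess H_p(q)(v,v)=|q|^{-3}\bigl(|v|^2-3\la\hat q,v\ra^2\bigr)$. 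Taking $v=(\nabla H_p(q))^\perp=-p+|q|^{-2}\hat q^\perp$ and using $\la\hat q,v\ra=-\la p,\hat q\ra$ together with the orthogonal decomposition $|p|^2=\la p,\hat q\ra^2+a^2$ (which holds because $a=\la p^\perp,\hat q\ra=-\la p,\hat q^\perp\ra$), this should simplify to $|q|^{-3}$ times
$$
  3a^2-2|p|^2+\frac{2a}{|q|^2}+\frac{1}{|q|^4}.
$$

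The remaining and most delicate step is to verify that this quantity is positive on $\Sigma_p$. There $|q|^{-1}=C+s$ with $s:=\sqrt{C^2+a}>0$ and $a=s^2-C^2$, and the idea is to substitute this to collapse the last two terms to $(C+s)^3(3s-C)$, reducing positivity to $\Phi(a)>2|p|^2$, where $\Phi(a):=3a^2+(C+s)^3(3s-C)$. I would then check that $\Phi'(a)=12\,s(s+C)>0$, so $\Phi$ is increasing on $[-|p|,|p|]$ and attains its minimum there at $a=-|p|$; writing $s_0:=\sqrt{C^2-|p|}$ and $|p|=(C-s_0)(C+s_0)$, the identity $(C-s_0)^2+(C+s_0)(3s_0-C)=4s_0^2$ yields $\Phi(-|p|)-2|p|^2=4\,s_0^2(C+s_0)^2>0$, the strictness being exactly the hypothesis $|p|<C^2$, i.e.\ $s_0>0$. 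I expect the bookkeeping in the Hessian simplification and this terminal algebraic identity to be the only real work; the fact that the estimate degenerates precisely at $|p|=C^2$ indicates that the hypothesis is sharp.
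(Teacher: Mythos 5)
Your proposal is correct; I verified the main computations. Solving $a\,r^2+2Cr-1=0$ along each ray is exactly the paper's determination of the admissible $\lambda$'s (your $\rho(\om)^{-1}=C+\sqrt{C^2+a}$ is the paper's $\lambda_0/|q|$), and both proofs reduce convexity to positivity of $\mathrm{Hess}\,H_p(q)(v,v)$ for $v$ proportional to $(\nabla H_p(q))^\perp$, arriving at the same scalar quantity: your $3a^2-2|p|^2+2a|q|^{-2}+|q|^{-4}$ agrees with the paper's $|q|^{-4}\bigl(|p|^2|q|^4+2|q|\la p^\perp,q\ra+1-3|q|^2\la p,q\ra^2\bigr)$ after using $\la p,\hat q\ra^2=|p|^2-a^2$. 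Where you genuinely diverge is the endgame. The paper writes $\la p^\perp,q\ra=|p||q|\cos t$, treats $A:=|p||q|^2\in(0,1)$ as a free parameter (beware: the paper calls this $a$, which clashes with your $a=\la p^\perp,\hat q\ra$), minimizes the trigonometric function $f(t)=A^2+2A\cos t+1-3A^2\sin^2t$ over $t$, and only invokes the constraint $H_p(q)=0$ at the critical points with $\cos t=-1/(3A)$, where it forces $|q|=2/(3C)$ and hence $A<4/9$, $A^2<1/3$. You instead substitute the constraint $|q|^{-1}=C+\sqrt{C^2+a}$ from the outset, check the collapse to $\Phi(a)-2|p|^2$ with $\Phi'(a)=12s(s+C)>0$, and evaluate the worst case $a=-|p|$ exactly as $4s_0^2(C+s_0)^2$ with $s_0=\sqrt{C^2-|p|}$ (all of which I checked). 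Your route buys a sharp conclusion -- positivity degenerates exactly at $|p|=C^2$, so the hypothesis of the lemma cannot be weakened by this method -- whereas the paper's minimization over an unconstrained angle is cruder but avoids differentiating through the constraint. Two minor quibbles that do not affect correctness: positivity of the second fundamental form is sufficient for strict convexity, not equivalent to it (you only need, and prove, the sufficient direction), and you might state explicitly that $\nabla H_p$ is outward along $\Sigma_p$ because $H_p<0$ on the punctured disk $D_p-\{0\}$; also your remark that the unbounded component is empty when $p=0$ is a correct refinement of the statement as printed.
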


\begin{proof}
Given $q\neq 0$, let us determine all $\lambda\in\R-0$ such that
$H_p(\lambda^{-1}q)=0$, i.e.
$$
   |\lambda|\lambda - 2C|q|\lambda - |q|\la p^\perp,q\ra = 0. 
$$
Suppose without loss of generality that $\la
p^\perp,q\ra\geq 0$. Then the equation has the 3 solutions
$$
   \lambda_0 = C|q|\left(1 + \sqrt{1+\frac{\la p^\perp,q\ra}{|q|C^2}}
   \right)>0,\qquad
   \lambda_\pm = -C|q|\left(1 \pm \sqrt{1-\frac{\la p^\perp,q\ra}{|q|C^2}}
   \right)\leq 0.
$$
By hypothesis we have
$$
   0 \leq x= \frac{\la p^\perp,q\ra}{|q|C^2} < 1,
$$
and thus $1-\sqrt{1-x}\leq 1-(1-x)<1$. We see that
$|\lambda_0|,|\lambda_+|>C|q|$ and $|\lambda_-|<C|q|$. Thus
$\lambda_0^{-1}q$ and $\lambda_+^{-1}q$ belong to the bounded
component contained in $\{|q|<1/C\}$ and $\lambda_-^{-1}q$ to the
unbounded component contained in $\{|q|>1/C\}$.

For convexity we compute the gradient and Hessian
$$
   \nabla H_p(q) = \left(\begin{array}{cc}
      p_2+\frac{q_1}{|q|^3} \\
      -p_1 + \frac{q_2}{|q|^3}
   \end{array}\right),\qquad 
   \mathrm{Hess}H_p(q) = \frac{1}{|q|^5}\left(\begin{array}{cc}
      |q|^2-3q_1^2 &-3q_1q_2\\
      -3q_1q_2 & |q|^2-3q_2^2
   \end{array}\right).
$$
A tangent vector to $H_p^{-1}(0)$ is given by
$$
   v:=-\big(\nabla H_p(q)\big)^\perp=\left(\begin{array}{c}
      p_1-\frac{q_2}{|q|^3}\\
      p_2+\frac{q_1}{|q|^3}
   \end{array}\right).
$$
A short computation yields
\begin{align*}
   |q|^5\langle v,\mathrm{Hess} H_p(q) v\rangle
   &= \big(|q|^2-3q_1^2\big)\bigg(p_1-\frac{q_2}{|q|^3}\bigg)^2 -6q_1
   q_2\bigg(p_2+\frac{q_1}{|q|^3}\bigg)\bigg(p_1-\frac{q_2}{|q|^3}\bigg)
   \cr
   &\ \ \ + \big(|q|^2-3q_2^2\big)\bigg(p_2+\frac{q_1}{|q|^3}\bigg)^2 \cr
   &= |q|^{-2}\Bigl( |p|^2|q|^4 + 2|q|\la p^\perp,q\ra + 1 - 3|q|^2\la
   p,q\ra^2 \Bigr)
\end{align*}
To show positivity of the right hand side, we write $\la
p^\perp,q\ra=|p||q|\cos t$, $\la p,q\ra=|p||q|\sin t$. Then the right
hand side equals $|q|^{-2}f(t)$ with the periodic function 
$$
   f(t) := a^2 + 2a\cos t + 1 - 3a^2\sin^2 t, \qquad a := |p||q|^2. 
$$
The derivative $f'(t)=-2a\sin t(1+3a\cos t)$ vanishes iff $\sin t=0$
or $\cos t=-1/(3a)$. In the case $\sin t=0$ we have $f(t) = (1-a)^2 > 0$, 
since $0<a<1$ due to the conditions $|p|<C^2$ and $|q|<1/C$. In the
case $\cos t=-1/(3a)$ we have
$$
   f(t) = a^2 - \frac{2a}{3a} + 1 - 3a^2(1-\frac{1}{(3a)^2}) =
   2(\frac{1}{3}-a^2). 
$$
The equation $H_p(q)=0$ yields
$$
   \frac{1}{|q|}-2C = \la p^\perp,q\ra = |p||q|\cos t =
   -\frac{|p||q|}{3a} = -\frac{1}{3|q|},
$$
thus $|q|=2/(3C)$. Using this and $|p|<C^2$, we estimate
$$
   a = |p||q|^2 = \frac{4|p|}{9C^2} < \frac{4}{9}. 
$$
Hence $a^2<1/3$, which shows $f(t)>0$ and concludes the proof of the
lemma. 
\comment{
Alternatively, one can argue as follows.
The null spaces of the Hessian are spanned by the vectors
$$n_\pm=\left(\begin{array}{c}
q_1\pm\sqrt{2}q_2\\
q_2 \mp \sqrt{2}q_1
\end{array}\right)$$
and we compute
\begin{eqnarray*}
\langle -\nabla H_p(q),n_\pm\rangle &=&
p_1(q_2\mp\sqrt{2}q_1)-p_2(q_1\pm\sqrt{2}q_2)-\frac{1}{|q|}\\
&=&(p_1-q_2)(q_2\mp\sqrt{2}q_1)+q_2(q_2\mp\sqrt{2}q_1)
-(p_2+q_1)(q_1\pm\sqrt{2}q_2)+q_1(q_1\pm\sqrt{2}q_2)-\frac{1}{|q|}\\
&=&(p_1-q_2)(q_2\mp\sqrt{2}q_1)
-(p_2+q_1)(q_1\pm\sqrt{2}q_2)+|q|^2-\frac{1}{|q|}\\
&\leq&\sqrt{(p_1-q_2)^2+(p_2+q_1)^2}\sqrt{(q_2\mp\sqrt{2}q_1)^2+
(q_1\pm\sqrt{2}q_2)^2}+|q|^2-\frac{1}{|q|}\\
&\leq&\sqrt{3}|q|\sqrt{-3+\frac{2}{|q|}+|q|^2}+|q|^2-\frac{1}{|q|}\\
&<&0
\end{eqnarray*}
So the tangent vector $v$ is never in a null space, hence $\langle
v,\mathrm{Hess} H_p v\rangle$ has the same sign for all $p$ and all
$q$ on the bounded component $\Sigma_p$. Since for $p=0$ we have
$\langle v,\mathrm{Hess} H_0 v\rangle=|q|^{-7}>0$, this sign is always
positive.
}
\end{proof}

The convex curve $\Sigma_p$ endows $T_p^*\mathbb{R}^2$ with the
structure of a Cartan space with fundamental function
$F^*_p:\R^2\to\R_{\geq 0}$ in the sense of \cite{MHSS}: $F^*_p(q)$ is
the unique $\lambda>0$ such that $\lambda^{-1}q\in\Sigma_p$. This
$\lambda$ has been computed in the preceding proof to be
$$
   F^*_p(q) = C|q|\left(1 + \sqrt{1+\frac{\la p^\perp,q\ra}{|q|C^2}}
   \right).
$$
This Cartan structure is non-reversible (i.e.~$F_p^*(-q)\neq
F_p^*(q)$) for $p\neq 0$ and does not
appear to belong to one of the standard classes (Randers, Berwald,
...).

\section{Kepler problem in a rotating frame}
Consider the Kepler problem in a frame that is rotating around the
origin with angular velocity $a>0$. This system is again Hamiltonian
with Hamiltonian 
\begin{align*}
   H(p,q)
   &= a(p_2q_1-p_1q_2) - \frac{1}{|q|} + \frac{|p|^2}{2} \cr
   &=\frac{1}{2}\big((p_1-aq_2)^2+(p_2+aq_1)^2\big)+U(q),
\end{align*}
with the potential
$$
   U(q)=-\frac{1}{|q|}-\frac{a^2}{2}|q|^2.
$$
The function $U(r)=-\frac{1}{r}-\frac{a^2}{2}r^2$ attains its
maximum $-\frac{3}{2}a^{2/3}$ at the point $r=a^{-2/3}$. So the
energy level
$$
   \{H(p,q)=-c\},\qquad c>\frac{3}{2}a^{2/3}
$$
has an unbounded component and a bounded component $\Sigma$ on which
we have
$$
   |q|\leq a^{-2/3}.
$$
The function
$$
   H_p(q)=H(p,q)+c
$$
is of the type considered above with $p$ replaced by $ap$ and
$2C=|p|^2/2+c$. Let us verify the hypothesis of Lemma~\ref{lem:convex},
$$
   a|p| < C^2 = \left(\frac{|p|^2}{4}+\frac{c}{2}\right)^2.
$$
Using $c>\frac{3}{2}a^{2/3}$ and setting $x:=|p|$, it suffices to
show
$$
   16 a x \leq (x^2+3a^{2/3})^2 = x^4+6x^2a^{2/3}+9a^{4/3}.
$$
Replacing $x$ by $xa^{1/3}$, this is equivalent to non-negativity
of the function
$$
   g(x) = x^4+6x^2-16x+9.
$$
Since $g$ attains its minimum $0$ at $x=1$, the hypotheses of the
lemma are satisfied. Hence the level set $\Sigma$ is convex in $q$
for each fixed $p$. Adding $p=\infty$, $\Sigma$ gives a fiberwise
convex hypersurface in $T^*S^2$, where the coordinate on $S^2$ is
$p$ and the fiber coordinate is $q$. Let $F^*:T^*S^2\to\R$ be the
fundamental function of the corresponding Cartan space, which we
computed above to be
$$
   F^*(p,q) = \frac{1}{4}(|p|^2+2c)|q| \left(1 + \sqrt{1+\frac{16a\la
         p^\perp,q\ra}{|q|(|p|^2+2c)^2}} \right). 
$$
The fiberwise Legendre transform of $F^*$ (via $\frac{1}{2}{F^*}^2$)
yields a Finsler metric $F:TS^2\to\R$ whose geodesic flow equals the
Hamiltonian flow of $H$ on the energy level $\Sigma$.

Note that $H$, and thus also $F$, is invariant under the rotation of
$S^2$ around the vertical axis.

\section{Polar coordinates}
\label{sec:polar_coor}
To take advantage of the symmetry of $F^*$ it is useful to rewrite it
in polar coordinates $(x,y)$ defined by
$$p_1=x \cos y, \quad p_2=x \sin y.$$
The dual coordinates $(r,t)$ on the cotangent bundle are determined by
$$r dx+tdy=q_1 dp_1+q_2 dp_2.$$
In particular,
$$q_1=(\cos y)r-\frac{\sin y}{x}t, \quad
q_2=(\sin y)r+\frac{\cos y}{x}t$$ and hence
$$
   |q|^2=r^2+\frac{1}{x^2}t^2, \quad
   \langle p^\perp, q\rangle=-t,\quad
   |p|^2 = x^2.
$$ 
Hence in polar coordinates the fundamental function reads
$$
F^*(x,y,r,t)=\frac{1}{4}(x^2+2c)\sqrt{r^2+\frac{1}{x^2}t^2}
\Bigg(1+\sqrt{1-\frac{16at}{\sqrt{r^2+\frac{1}{x^2}t^2}(x^2+2c)^2}}
\Bigg).$$ 
Note that $F^*$ does not explicitly depend on $y$.

\subsection{A one-parameter family of Cartan metrics}
The above formula for the Cartan metrics $F^*$ describes a
two-parameter family with parameters $c$ and $a$. 
However, each such Cartan metric is a constant multiple of one with
$a$-parameter equal to $1$. 

\begin{proposition}
$F^*_{c,a}=a^{1/3}F^*_{ca^{-2/3},1}$.
\end{proposition}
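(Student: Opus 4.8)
The plan is to realize the identity as an avatar of the classical scaling symmetry of the Kepler problem. Recall Kepler's third law: rescaling positions, momenta and time by appropriate powers of a parameter carries the Kepler problem to itself. In a frame rotating with angular velocity $a$ one checks directly that the substitution $p=a^{1/3}P$, $q=a^{-2/3}Q$ turns the Hamiltonian with angular velocity $a$ into $a^{2/3}$ times the Hamiltonian with angular velocity $1$, hence carries the bounded component of $\{H=-c\}$ to the bounded component of the level $\{H=-ca^{-2/3}\}$ of the rotating Kepler problem with angular velocity $1$. This already accounts for the shape of the asserted formula---the appearance of $ca^{-2/3}$ and the overall power of $a$---and what remains is to phrase it as an identity of Cartan metrics on $T^*S^2$ and to read it off from the explicit expression of Section~\ref{sec:polar_coor}.

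First I would record the effect of the base rescaling on $T^*S^2$. Let $\Psi_a$ denote the diffeomorphism of $S^2$ induced by the dilation $p\mapsto a^{-1/3}p$ of $\R^2$ (extended to fix $p=\infty$), and let $\Psi_a^{\mathrm{lift}}\colon T^*S^2\to T^*S^2$ be its cotangent lift. Using the defining relation $r\,dx+t\,dy=q_1\,dp_1+q_2\,dp_2$ of the polar coordinates, together with the same relation for the rescaled coordinates, one finds
$$
   \Psi_a^{\mathrm{lift}}(x,y,r,t)=\bigl(a^{-1/3}x,\ y,\ a^{1/3}r,\ t\bigr),
$$
so the fiber coordinate $t$ is unchanged while $r$ acquires a factor $a^{1/3}$. (Note that $\Psi_a^{\mathrm{lift}}$ is a symplectomorphism, unlike the physical scaling above, from which it differs by a fiberwise dilation; since $F^*$ is positively homogeneous of degree one in the fiber, this dilation is the source of the overall factor $a^{1/3}$ in the statement.) The precise assertion to prove is therefore the pointwise identity
$$
   F^*_{c,a}(x,y,r,t)=a^{1/3}\,F^*_{ca^{-2/3},1}\bigl(a^{-1/3}x,\ y,\ a^{1/3}r,\ t\bigr),
$$
equivalently $(\Psi_a^{\mathrm{lift}})^*F^*_{ca^{-2/3},1}=a^{-1/3}F^*_{c,a}$, which is what $F^*_{c,a}=a^{1/3}F^*_{ca^{-2/3},1}$ abbreviates.

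The verification is then a one-line substitution into the formula for $F^*$ from Section~\ref{sec:polar_coor}. Its three building blocks transform by
$$
   (a^{-1/3}x)^2+2ca^{-2/3}=a^{-2/3}\bigl(x^2+2c\bigr),\qquad
   \sqrt{(a^{1/3}r)^2+\tfrac{1}{(a^{-1/3}x)^2}t^2}=a^{1/3}\sqrt{r^2+\tfrac{1}{x^2}t^2},
$$
and consequently the argument of the inner square root of $F^*_{ca^{-2/3},1}$ at $(a^{-1/3}x,y,a^{1/3}r,t)$, namely
$$
   \frac{16\,t}{\sqrt{(a^{1/3}r)^2+\tfrac{1}{(a^{-1/3}x)^2}t^2}\,\bigl((a^{-1/3}x)^2+2ca^{-2/3}\bigr)^2},
$$
simplifies to $\dfrac{16a\,t}{\sqrt{r^2+\tfrac{1}{x^2}t^2}\,(x^2+2c)^2}$, which is exactly the argument occurring in $F^*_{c,a}(x,y,r,t)$. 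Hence the two bracketed factors $\bigl(1+\sqrt{\ \cdots\ }\bigr)$ agree, and for the prefactors the powers of $a$ collapse, $a^{1/3}\cdot a^{-2/3}\cdot a^{1/3}=1$, whence the identity.

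The only genuine subtlety---and the step that deserves care---is that this is not an equality of functions in a single fixed coordinate chart: it is an equality modulo the implicit rescaling $\Psi_a$ of the base $S^2$, so one must pin down the cotangent lift correctly (in particular that $r$, but not $t$, picks up a factor of $a^{1/3}$) and carefully track the fractional powers of $a$. Once this bookkeeping is in place nothing further is needed. One could alternatively avoid coordinates entirely by running the scaling symmetry of $H$ directly on the hypersurface $\Sigma$ and invoking naturality of the fiberwise Legendre transform, but the coordinate computation above is the shortest route.
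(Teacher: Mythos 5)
Your proof is correct and is essentially the paper's own argument: the substitution $x'=a^{-1/3}x$, $r'=a^{1/3}r$, $t'=t$ (the cotangent lift you describe is exactly the coordinate change $x=a^{1/3}x'$, $r=a^{-1/3}r'$ used in the paper) followed by direct substitution into the explicit formula for $F^*$, with the same bookkeeping of the fractional powers of $a$. The additional motivation via the Kepler scaling symmetry and the explicit interpretation of the identity as $(\Psi_a^{\mathrm{lift}})^*F^*_{ca^{-2/3},1}=a^{-1/3}F^*_{c,a}$ are nice clarifications but do not change the route.
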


\begin{proof}
Use the coordinate transformation $x=a^{1/3}x'$, $y=y'$.
On the cotangent fibers, this coordinate transformation induces
$r=a^{-1/3}r'$ and $t=t'$. We get
\begin{align*}
   &F^*_{c,a}(x,y,r,t) \cr
   &= \frac{1}{4}(a^{2/3}{x'}^2+2c)\sqrt{({r'}^2a^{-2/3} +
     \frac{1}{{x'}^2a^{2/3}}t'^2} \Bigg(1+\sqrt{1-\frac{16at'}
     {\sqrt{{r'}^2a^{-2/3} + \frac{1}{{x'}^2a^{2/3}}t'^2}(a^{2/3}
       x'^2+2c)^2}} \Bigg) \cr
   &= a^{1/3} \frac{1}{4}({x'}^2+2ca^{-2/3})\sqrt{{r'}^2 +
  \frac{1}{{x'}^2}t'^2} \Bigg(1+\sqrt{1-\frac{16t'}
  {\sqrt{{r'}^2+\frac{1}{{x'}^2}t'^2}( x'^2+2ca^{-2/3})^2}} \Bigg) \cr 
   &= a^{1/3} F^*_{ca^{-2/3},1}(x',y',r',t').
\end{align*}
\end{proof}
We shall henceforth put $a=1$.

\section{Spray coefficients}

An elegant way of computing the flag curvature of a Finsler surface
is via spray coefficients, see \cite[Section 12.5]{BCS}. Since we
are on the Cartan side we first have to Legendre transform the
fundamental function $F^*$, in order to apply the formulas in
\cite{BCS} directly. However, we do not know an explicit description
of the Legendre transform of $F^*$. 
We therefore derive in this section a formula to obtain the spray
coefficients of a Cartan 
surface directly from the fundamental function.
This allows us in turn to implement a MAPLE algorithm for computing
the flag curvature of a Cartan surface. 
We abbreviate
$$\mathcal{L}^*=\frac{1}{2}{F^*}^2$$
and denote by $\mathcal{L}$ the Legendre dual of $\mathcal{L}^*$.
The coordinates on the tangent bundle are given by
$$u(x,y,r,t)=\mathcal{L}^*_r(x,y,r,t)$$
$$v(x,y,r,t)=\mathcal{L}^*_t(x,y,r,t).$$
Here a subscript denotes differentiation with respect to the
corresponding variable. We first recall some useful facts of the
Legendre transformation for which we refer to \cite[Section 14.8]{BCS} 
or \cite{MHSS}. Since $\mathcal{L}^*$ and $\mathcal{L}$ are
2-homogeneous they numerically coincide
\begin{equation}\label{LL}
\mathcal{L}^*\Big(x,y,r,t\Big)=\mathcal{L}
\Big(x,y,u(x,y,r,t),v(x,y,r,t)\Big).
\end{equation}
 The cometric coefficients are
defined by
\begin{equation}
\label{eq:cometric}
g^{11}=\mathcal{L}^*_{rr},
\quad g^{12}=g^{21}=\mathcal{L}^*_{tr}, \quad
g^{22}=\mathcal{L}^*_{tt}.
\end{equation}
The metric
coefficients inverse to the cometric coefficients can be obtained by
$$g_{11}=\mathcal{L}_{uu},
\quad g_{12}=g_{21}=\mathcal{L}_{uv}, \quad
g_{22}=\mathcal{L}_{vv}.$$ Moreover, since the Legendre transform is
involutive, one has the identities
$$r=\mathcal{L}_u, \quad t=\mathcal{L}_v.$$
We further abbreviate the determinant of the cometric by
$$g=g^{11}g^{22}-\big(g^{12}\big)^2.$$
Differentiating (\ref{LL}) we
obtain
$$\frac{\partial \mathcal{L}^*}{\partial x}
=\frac{d\mathcal{L}}{dx}= \frac{\partial \mathcal{L}}{\partial x}
+\frac{\partial \mathcal{L}}{\partial u}\frac{\partial u}{\partial
x}+\frac{\partial \mathcal{L}}{\partial v}\frac{\partial v}{\partial
x}$$ 
and therefore
$$
\mathcal{L}_x=\mathcal{L}^*_x-\mathcal{L}_u \mathcal{L}^*_{r x}
-\mathcal{L}_v \mathcal{L}^*_{t x}
.
$$
Similarly
$$
\mathcal{L}_y=\mathcal{L}^*_y-\mathcal{L}_u \mathcal{L}^*_{r y}
-\mathcal{L}_v \mathcal{L}^*_{t y}
.
$$ 
From the identity
$$
0=\frac{\partial r}{\partial x}=
\frac{\partial \mathcal{L}_u}{\partial x}+ \frac{\partial
\mathcal{L}_u}{\partial u}\frac{\partial u}{\partial x}
+\frac{\partial \mathcal{L}_u}{\partial v}\frac{\partial v}{\partial
x}
$$ 
one obtains
\begin{eqnarray*} \mathcal{L}_{ux} &=& -\mathcal{L}_{uu}
\mathcal{L}^*_{r x}-\mathcal{L}_{uv}\mathcal{L}^*_{t x}\\
&=&-g_{11}\mathcal{L}^*_{r x}-g_{12} \mathcal{L}^*_{t x}\\
&=&\frac{1}{g}\Big(-g^{22}\mathcal{L}^*_{r x}+g^{12}\mathcal{L}^*_{t
x}\Big)
\end{eqnarray*}
In a similar vein, one derives the formulas
\begin{eqnarray*}
\mathcal{L}_{uy}&=&-\mathcal{L}_{uu}\mathcal{L}^*_{r y}
-\mathcal{L}_{uv} \mathcal{L}^*_{t y}\\
&=&\frac{1}{g}\Big(-g^{22}\mathcal{L}^*_{r y}+g^{12}
\mathcal{L}^*_{t y}\Big)
\end{eqnarray*}
\begin{eqnarray*}
\mathcal{L}_{vx}&=&-\mathcal{L}_{vu}\mathcal{L}^*_{r
x}-\mathcal{L}_{vv}\mathcal{L}^*_{t x}\\
&=&\frac{1}{g}\Big(g^{12}\mathcal{L}^*_{r
x}-g^{11}\mathcal{L}^*_{tx}\Big)
\end{eqnarray*}
\begin{eqnarray*}
\mathcal{L}_{v y}&=&-\mathcal{L}_{vu}\mathcal{L}^*_{r y}
-\mathcal{L}_{vv}\mathcal{L}^*_{t y}\\
&=&\frac{1}{g}\Big(g^{12}\mathcal{L}^*_{r y}-g^{11}\mathcal{L}^*_{t
y}\Big).
\end{eqnarray*}
Plugging these identities into the formulas for the spray coefficients
$G,H$ (this $H$ is not the Hamiltonian!)  
in \cite[p 330]{BCS} we get
\begin{eqnarray*}
2G&=&g\mathcal{L}_{vv}\mathcal{L}_x-g\mathcal{L}_{vx}\mathcal{L}_v
-g\mathcal{L}_{uv}\mathcal{L}_y+g\mathcal{L}_{uy}\mathcal{L}_v\\
&=&g^{11}\Big(\mathcal{L}^*_x-\mathcal{L}_u\mathcal{L}^*_{r
x}-\mathcal{L}_v \mathcal{L}^*_{t
x}\Big)-\Big(g^{12}\mathcal{L}^*_{r
x}-g^{11}\mathcal{L}^*_{tx}\Big)\mathcal{L}_v\\
& &+g^{12}\Big(\mathcal{L}^*_y-\mathcal{L}_u \mathcal{L}^*_{r y}
-\mathcal{L}_v \mathcal{L}^*_{t y}\Big)+
\Big(-g^{22}\mathcal{L}^*_{r y}+g^{12} \mathcal{L}^*_{t y}\Big)
\mathcal{L}_v\\
&=&\Big(g^{11}\mathcal{L}^*_x+g^{12}\mathcal{L}^*_y\Big) -\Big(
g^{11}\mathcal{L}^*_{r x}+g^{12}\mathcal{L}^*_{r y}\Big)
\mathcal{L}_u\\
& &+\Big(g^{11}\mathcal{L}^*_{t x}-g^{12}\mathcal{L}^*_{r x}
-g^{11}\mathcal{L}^*_{t x}-g^{12}\mathcal{L}^*_{t y}
-g^{22}\mathcal{L}^*_{r y}+g^{12}\mathcal{L}^*_{t y}\Big)
\mathcal{L}_v\\
&=&\Big(g^{11}\mathcal{L}^*_x+g^{12}\mathcal{L}^*_y\Big) -\Big(
g^{11}\mathcal{L}^*_{r x}+g^{12}\mathcal{L}^*_{r y}\Big)
\mathcal{L}_u-\Big(g^{12}\mathcal{L}^*_{r x} +g^{22}\mathcal{L}^*_{r
y}\Big) \mathcal{L}_v
\end{eqnarray*}
\begin{eqnarray*}
2H&=&g\mathcal{L}_{uu}\mathcal{L}_y-g\mathcal{L}_{uy}\mathcal{L}_u
-g\mathcal{L}_{vu}\mathcal{L}_x+g\mathcal{L}_{vx}\mathcal{L}_u\\
&=&g^{22}\Big(\mathcal{L}^*_y-\mathcal{L}_u \mathcal{L}^*_{r y}
-\mathcal{L}_v \mathcal{L}^*_{t y}\Big)
-\Big(-g^{22}\mathcal{L}^*_{r y}+g^{12} \mathcal{L}^*_{t
y}\Big)\mathcal{L}_u\\
& &+g^{12}\Big(\mathcal{L}^*_x-\mathcal{L}_u \mathcal{L}^*_{r x}
-\mathcal{L}_v \mathcal{L}^*_{t x}\Big)+ \Big(g^{12}\mathcal{L}^*_{r
x}-g^{11}\mathcal{L}^*_{tx}\Big)
\mathcal{L}_u\\
&=&\Big(g^{12}\mathcal{L}^*_x+g^{22}\mathcal{L}^*_y\Big)
-\Big(g^{12}\mathcal{L}^*_{t x}+g^{22}\mathcal{L}^*_{t y}\Big)
\mathcal{L}_v\\
& &+\Big(g^{22}\mathcal{L}^*_{r y}-g^{12}\mathcal{L}^*_{t y}
-g^{22}\mathcal{L}^*_{r y}-g^{12}\mathcal{L}^*_{r x} +g^{12}
\mathcal{L}^*_{r x}-g^{11}\mathcal{L}^*_{t
x}\Big)\mathcal{L}_u\\
&=&\Big(g^{12}\mathcal{L}^*_x+g^{22}\mathcal{L}^*_y\Big)
-\Big(g^{12}\mathcal{L}^*_{t x}+g^{22}\mathcal{L}^*_{t y}\Big)
\mathcal{L}_v-\Big(g^{11}\mathcal{L}^*_{t x}+g^{12}\mathcal{L}^*_{t
y}\Big)\mathcal{L}_u
\end{eqnarray*}

\subsection{A formula for the flag curvature following Bao-Chern-Shen}
We shall use formula (12.5.14) from \cite{BCS} to compute the flag
curvature. This formula is given by
\begin{equation}
\label{eq:flag_curvature}
K=\frac{(G_{xv}-G_{yu})v+2GG_{uu}+2HG_{uv}-G_uG_u-G_vH_u}
{v \mathcal L_v}.
\end{equation}
All these terms can be rewritten entirely into terms that exist on the Cartan side.
In particular, we shall use the above formulas for $G$ and $H$ in
terms of $g^{ij}$ and $\mathcal L^*$ and their derivatives. 

As before, we define the Hamiltonian or Legendre dual of $\mathcal L$ by
$$
   {\mathcal L}^*=\frac{1}{2}{F^*}^2
$$
The cometric can be obtained via Equation~\eqref{eq:cometric}.
As the Hamiltonian is given in terms of the base coordinates $x,y$ and cotangent fiber coordinates $r,t$, we need to apply the chain rule to compute \eqref{eq:flag_curvature}, which is given in terms of $x,y$ and the tangent fiber coordinates $u,v$.
We need to use the following relations,
$$
\frac{\partial}{\partial u}=
\frac{\partial r}{\partial u} \frac{\partial}{\partial r }
+\frac{\partial t}{\partial u} \frac{\partial}{\partial t }=
g_{11} \frac{\partial}{\partial r }
+g_{21} \frac{\partial}{\partial t }.
$$
Similarly,
$$
\frac{\partial}{\partial v}=
g_{12} \frac{\partial}{\partial r }
+g_{22} \frac{\partial}{\partial t }.
$$
Note that the metric can be expressed in terms of the cometric and its determinant.
The dependence on the base coordinate is a little tricky, since the independent variables are now $x,y,u,v$, whereas the Hamiltonian is given in terms $x,y,r,t$.
This means that the (co)-metric comes in.
Let us write out the required expression.
We shall use the notation
$$
\left( \frac{\partial G}{\partial x} \right)_{y,u,v}
$$
to indicate the derivative of $G$ keeping $y,u,v$ fixed.
We obtain
\begin{eqnarray*}
\left( \frac{\partial G}{\partial x} \right)_{y,u,v}
=&&
\left( \frac{\partial G}{\partial x} \right)_{y,r,t}
+
\left( \frac{\partial G}{\partial r} \right)_{x,y,t}
\left( \frac{\partial r}{\partial x} \right)_{y,u,v}
+
\left( \frac{\partial G}{\partial t} \right)_{x,y,r}
\left( \frac{\partial t}{\partial x} \right)_{y,u,v}\\
=&&
\left( \frac{\partial G}{\partial x} \right)_{y,r,t}
+
\left( \frac{\partial G}{\partial r} \right)_{x,y,t}
\left(
\left( \frac{\partial g_{11}}{\partial x} \right)_{y,u,v}u
+
\left( \frac{\partial g_{12}}{\partial x} \right)_{y,u,v}v
\right)
\\
&&
+
\left( \frac{\partial G}{\partial t} \right)_{x,y,r}
\left(
\left( \frac{\partial g_{21}}{\partial x} \right)_{y,u,v}u
+
\left( \frac{\partial g_{22}}{\partial x} \right)_{y,u,v}v
\right)
.
\end{eqnarray*}
Similarly, we have
\begin{eqnarray*}
\left( \frac{\partial G}{\partial y} \right)_{x,u,v}
=&&
\left( \frac{\partial G}{\partial y} \right)_{x,r,t}
+
\left( \frac{\partial G}{\partial r} \right)_{x,y,t}
\left(
\left( \frac{\partial g_{11}}{\partial y} \right)_{x,u,v}u
+
\left( \frac{\partial g_{12}}{\partial y} \right)_{x,u,v}v
\right)
\\
&&
+
\left( \frac{\partial G}{\partial t} \right)_{x,y,r}
\left(
\left( \frac{\partial g_{21}}{\partial y} \right)_{x,u,v}u
+
\left( \frac{\partial g_{22}}{\partial y} \right)_{x,u,v}v
\right)
.
\end{eqnarray*}
In the above we have used
$$
r=g_{11}u+g_{12}v \text{ and }t=g_{21}u+g_{22}v.
$$
Using the cometric $g^{ij}$, we can conversely express $u$ and $v$ in terms of $r$ and $t$.
Hence we can translate all derivatives in the expression for the flag curvature in terms of quantities on the Cartan side.

\section{Computing the Flag curvature of the rotating Kepler problem}
We now come to the computer implementation of the curvature computation.
The MAPLE-program is included in the appendix.
Although the program itself is straightforward with the above formulas, there are some practical issues involved which we shall briefly discuss.

Even though our computer setup was sufficient to compute the full expression for the flag curvature of the Cartan metric $F^*_{c,1}$ as a function of $x,r,t$, the resulting expression was too unwieldy to simplify to reasonable proportions.
However, it is possible to evaluate and simplify the curvature exactly in specific points, for instance if the $r$ coordinate is $0$.
The simplified expression for the curvature along this ray is still very long, see Equation~\eqref{eq:flag_curvature_rotating_kepler}.

In order to make these evaluations work, one only needs to realize that it is beneficial for a computer to quickly evaluate the results as soon as further differentiation is not needed.
This is the reason for the many substitutions in the MAPLE program.

\subsection{Analytical and numerical results}
We first fix the fiber coordinate $(r,t)=(0,x)$ and compute the curvature as a function of $x$, which is the coordinate on the base sphere $S^2$.
The MAPLE program yields the following expression for the flag curvature.
Write 
$$
\alpha:=\sqrt {{x}^{4}+4\,{x}^{2}c+4\,{c}^{2}-16\,
x}
$$
to simplify the resulting expression.
\begin{equation}
\label{eq:flag_curvature_rotating_kepler}
K(F^*_{c,1})(x,0,0,x)=\frac{2}{ \left( {x}^{2}+2\,c+\alpha \right)  \left( {x}^{2}\alpha+2\,c\alpha+{x}^{4}+4\,{x}^{2}
c+4\,{c}^{2}-8\,x \right)  \left( {x}^{4}+4\,{x}^{2}c+4\,{c}^{2}-16\,x
 \right) ^{2}}\cdot
\end{equation}
\begin{eqnarray*}
&&
\left(
5824\,{x}^{2}{c}^{4}-5888\,{x}^{3}{c}^{5}-3840\,{x}^{2}c-
2240\,{c}^{6}x-6320\,{x}^{5}{c}^{4}-384\,{x}^{2}\alpha+1120\,{x}^{6}{c}^{5}
+2\,{x}^{14}c
\right.
\\
&&
+28\,{x}^{12}{
c}^{2}-6528\,{x}^{5}c+256\,{c}^{8}-864\,\alpha{c}^{5}x-1872\,\alpha{c}^{4}{x}^{3}
+896\,{x}^{2}{c}^{7} -1296\,{x}^{7}+204\,{x}^{10}
\\
&&
-768
\,{c}^{5}-9\,{x}^{13}+2096\,{x}^{8}c-160\,{x}^{11}c-1060\,{c}^{2}{x}^{
9}-3520\,{c}^{3}{x}^{7}+11520\,{x}^{4}{c}^{3}
+3840\,{c}^{3}x
\\
&&
+7584\,{x}
^{6}{c}^{2}-5952\,{x}^{3}{c}^{2}-648\,{x}^{7}\alpha{c}^{2}-126\,{x}^{9}\alpha c
-1120\,{x}^{3}\alpha c+2448\,{x}^{4}\alpha{c}^{2}+
1152\,\alpha{c}^{2}x
\\
&&
+1920\,{x}^{
4}+168\,{x}^{10}{c}^{3}+1344\,{x}^{4}{c}^{6}+560\,{x}^{8}{c}^{4}+1032
\,{x}^{6}\alpha c-1584\,{x}^{5}
\alpha{c}^{3}
+1632\,\alpha{c}^{3}{x}^{2}
\\
&&
+128\,\alpha{c}^{7}
+384\,\alpha{c}^{6}{x}^{2}
-9\,{x}^{11}\alpha+2\,{x}^{12}\alpha c
+132\,{x}^{8}\alpha
+320\,{x}^{6}
\alpha{c}^{4}+120\,{x}^{8}\alpha{c}^{3}
\\
&&
\left.+24\,{x}^{10}\alpha{c}^{2}
+480\,\alpha{c}^{5}{x}^{4}-528\,\alpha{x}^{5}-384\,\alpha
{c}^{4}
\right)
\end{eqnarray*}
Since this expression is too complicated to deduce anything meaningful
by hand, we have plotted some graphs in 
Figures~\ref{fig:K=1.51},~\ref{fig:K=1.65} and~\ref{fig:K=2}. 
We see that if $c$ is small enough, then the flag curvature can become
negative. Note that the graphs are also plotted for $x<0$, where the
sign of $t$ changes since we insert the covector $(r,t)=(0,x)$.  

\begin{figure}[htp]
\centering
\includegraphics[width=0.8\textwidth,clip]{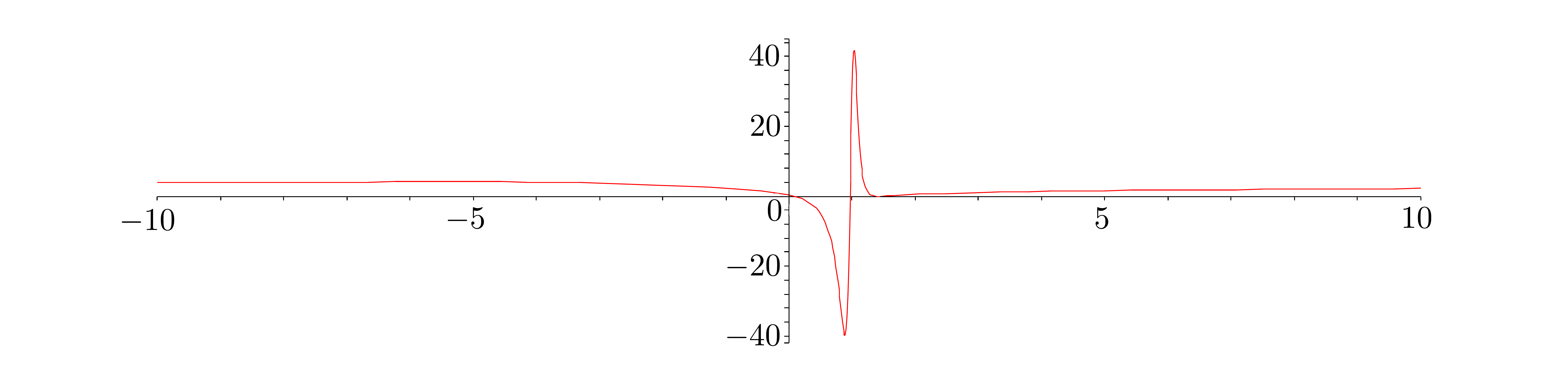}%
\caption{Flag curvature as a function of $x$ for $r=0$, $t=\pm 1$ and $c=1.51$}
\label{fig:K=1.51}
\end{figure}
\begin{figure}[htp]
\centering
\includegraphics[width=0.8\textwidth,clip]{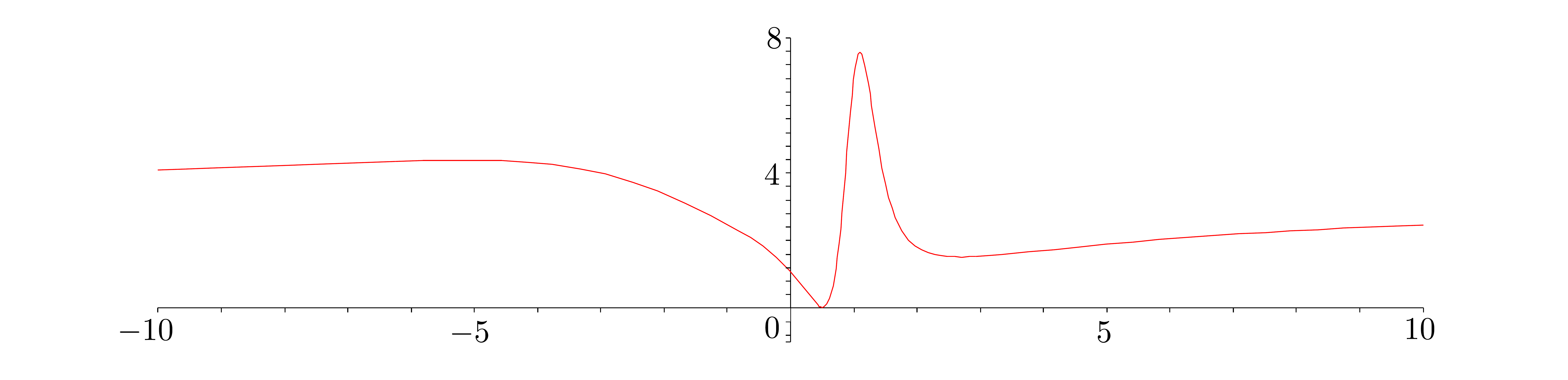}%
\caption{Flag curvature as a function of $x$ for $r=0$, $t=\pm 1$ and $c=1.65$}
\label{fig:K=1.65}
\end{figure}
\begin{figure}[htp]
\centering
\includegraphics[width=0.8\textwidth,clip]{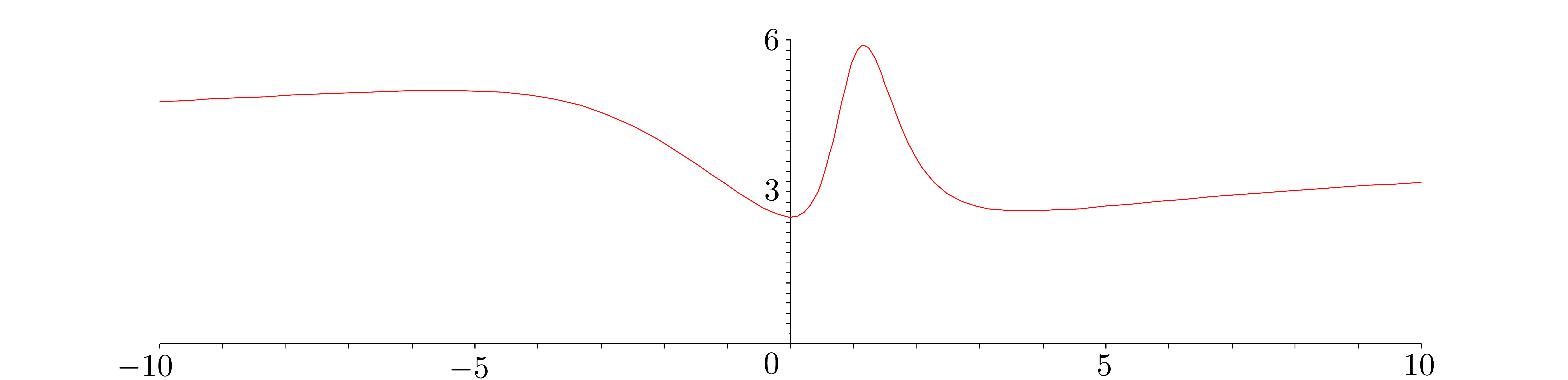}%
\caption{Flag curvature as a function of $x$ for $r=0$, $t=\pm 1$ and $c=2$}
\label{fig:K=2}
\end{figure}

Let us now consider the flag curvature of $K_c$ as a function on the
unit cotangent bundle. We shall use the coordinates $x,\phi$, where
the cotangent fiber coordinates are given by 
$$
   (r,t)=(\sin \phi,\cos \phi ).
$$
The full expression for the curvature for $r\neq 0$ is considerably
more complicated than
Equation~\eqref{eq:flag_curvature_rotating_kepler}, so 
we shall only present numerical results for this general case.

Using MAPLE we made a graph of the flag curvature by evaluating the
curvature on a $256 \times 256$ lattice, see
Figure~\ref{figure:graph_flag_curvature}.  
On this lattice the minimal value of the flag curvature for $c=1.55$ was
$-5.55$, and the maximal value $15.21$.

\begin{figure}[htp]
\centering
\includegraphics[width=0.7\textwidth,clip]{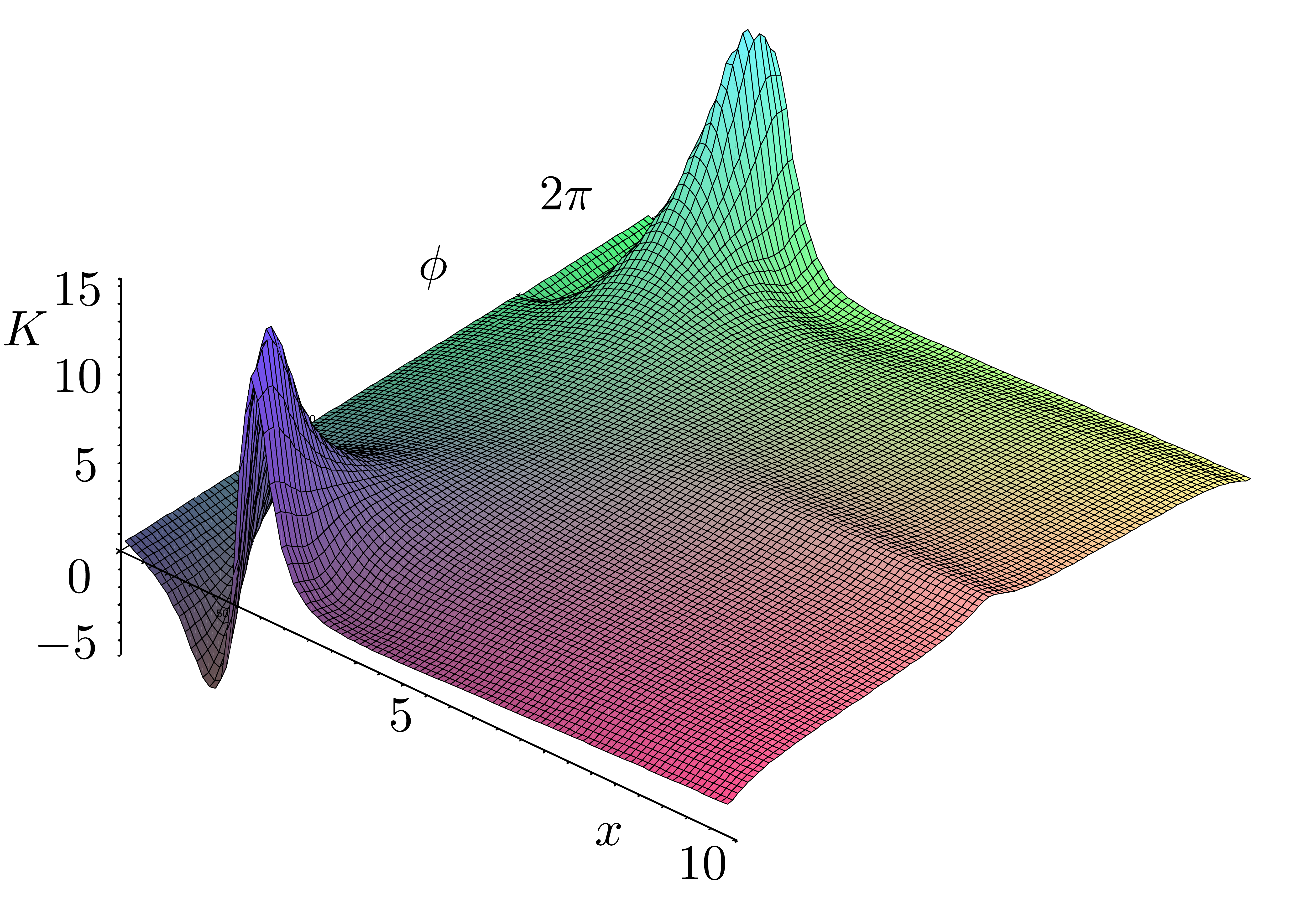}%
\caption{Flag curvature as a function of $x$ and $\phi$ for $c=1.55$}
\label{figure:graph_flag_curvature}
\end{figure}

\section{Appendix}
Here is a general program to (numerically) compute the flag curvature of a Cartan metric.
\begin{verbatim}
> K:=proc(F,x0,y0,r0,t0)
> local L,Lx,Ly,Lr,Lt,Lxt,Lyr,Lrr,Lrt,Ltt,D,
>        G,H,Gr,Gt,Gar,Gat,Grr,Grt,Gxt,Gyr,Hr,K,
>        Garre,Garte,Grre,Grte,Gxte,Gyre,Hre,Gre,Gte,
>        Ht,Hrr,Hrt,Htt,detg,gi,g,gie,ge,
>        Lsx,Lsy,Lsq1x,Lsq1y,Lsq2x,Lsq2y:
> 
> ## Hamiltonian given
> H:=F^2/2:
> 
> Hr:=diff(H,r):
> Ht:=diff(H,t):
> Hrr:=diff(Hr,r):
> Hrt:=diff(Hr,t):
> Htt:=diff(Ht,t):
> 
> gi[1,1]:=Hrr: gi[1,2]:=Hrt: gi[2,1]:=Hrt: gi[2,2]:=Htt:  
> detg:=gi[1,1]*gi[2,2]-gi[1,2]^2:
> 
> ##obtain Finsler 'metric' g_{ij}: note that the dependence on the fiber 
> ##coordinates is not correct, since we haven't performed a Legendre 
> ##transformation
> g[1,1]:=1/detg*gi[2,2]: g[1,2]:=-1/detg*gi[1,2]: 
> g[2,1]:=-1/detg*gi[2,1]: g[2,2]:=1/detg*gi[1,1]:
> 
> Lr:=r:
> Lt:=t:
> 
> Lsx:=diff(H,x): Lsy:=diff(H,y):
> Lsq1x:=diff(Hr,x): Lsq1y:=diff(Hr,y):
> Lsq2x:=diff(Ht,x): Lsq2y:=diff(Ht,y):
> 
> ##Formulas from Section 5
> G:= 1/2*( gi[1,1]*Lsx+gi[1,2]*Lsy-Lr*(gi[1,1]*Lsq1x+gi[1,2]*Lsq1y)
>           -Lt*(gi[1,2]*Lsq1x+gi[2,2]*Lsq1y) ):
> H:=1/2*(gi[1,2]*Lsx+gi[2,2]*Lsy-Lt*(gi[1,2]*Lsq2x+gi[2,2]*Lsq2y)
>           -Lr*(gi[1,1]*Lsq2x+gi[1,2]*Lsq2y) ):
> 
> ##avoid double computations
> Gar:=diff(G,r): Gat:=diff(G,t):
> 
> Gr:=Gar*g[1,1]+Gat*g[2,1]:
> Gt:=Gar*g[1,2]+Gat*g[2,2]:
> 
> ##evaluate quickly
> 
> ge[1,1]:=subs(x=x0,y=y0,r=r0,t=t0,g[1,1]): 
> ge[1,2]:=subs(x=x0,y=y0,r=r0,t=t0,g[1,2]): 
> ge[2,1]:=subs(x=x0,y=y0,r=r0,t=t0,g[2,1]): 
> ge[2,2]:=subs(x=x0,y=y0,r=r0,t=t0,g[2,2]):
> gie[1,1]:=subs(x=x0,y=y0,r=r0,t=t0,gi[1,1]): 
> gie[1,2]:=subs(x=x0,y=y0,r=r0,t=t0,gi[1,2]): 
> gie[2,1]:=subs(x=x0,y=y0,r=r0,t=t0,gi[2,1]): 
> gie[2,2]:=subs(x=x0,y=y0,r=r0,t=t0,gi[2,2]):
> 
> ##avoid double computations
> Garre:=subs(x=x0,y=y0,r=r0,t=t0,diff(Gr,r) ): 
> Garte:=subs(x=x0,y=y0,r=r0,t=t0, diff(Gr,t) ): 
> 
> ##evaluate when no more differentation is needed.
> Grre:=Garre*ge[1,1]+Garte*ge[2,1] :
> Grte:=Garre*ge[1,2]+Garte*ge[2,2] :
> 
> 
> Gxte:=subs(x=x0,y=y0,r=r0,t=t0, diff(Gt,x) )
>       +subs(x=x0,y=y0,r=r0,t=t0, diff(Gt,r) )
>        *subs(x=x0,y=y0,r=r0,t=t0,( diff(g[1,1],x)
>        *(gie[1,1]*r+gie[1,2]*t)+diff(g[1,2],x)*(gie[2,1]*r+gie[2,2]*t) ) ) 
>       +subs(x=x0,y=y0,r=r0,t=t0, diff(Gt,t) )
>        *subs(x=x0,y=y0,r=r0,t=t0,( diff(g[2,1],x)
>        *(gie[1,1]*r+gie[1,2]*t)+diff(g[2,2],x)*(gie[2,1]*r+gie[2,2]*t) ) ):
> 
> 
> Gyre:=subs(x=x0,y=y0,r=r0,t=t0,diff(Gr,y) )
>       +subs(x=x0,y=y0,r=r0,t=t0,diff(Gr,r) )
>        *subs(x=x0,y=y0,r=r0,t=t0,( diff(g[1,1],y)
>        *(gie[1,1]*r+gie[1,2]*t)+diff(g[1,2],y)*(gie[2,1]*r+gie[2,2]*t) ) ) 
>       +subs(x=x0,y=y0,r=r0,t=t0,diff(Gr,t) )
>        *subs(x=x0,y=y0,r=r0,t=t0,( diff(g[2,1],y)
>        *(gie[1,1]*r+gie[1,2]*t)+diff(g[2,2],y)*(gie[2,1]*r+gie[2,2]*t) ) ):
> 
> 
> Hre:=subs(x=x0,y=y0,r=r0,t=t0,diff(H,r) )*ge[1,1]
>       +subs(x=x0,y=y0,r=r0,t=t0,diff(H,t) )*ge[2,1] :
> 
> 
> Gre:=subs(x=x0,y=y0,r=r0,t=t0,Gr):
> Gte:=subs(x=x0,y=y0,r=r0,t=t0,Gt):
> 
> K:= subs(x=x0,y=y0,r=r0,t=t0, 
>       ( (Gxte-Gyre)*(gie[2,1]*r+gie[2,2]*t)+2*G*Grre+2*H*Grte-Gre*Gre-Gte*Hre )
>       /( (gie[2,1]*r+gie[2,2]*t)*Lt)  ):
> 
> 
> end proc;

> a:=1:c:=1.55:x0:=0.75: y0:=0: r0:=0: t0:=1:
> F:=1/4*(x^2+2*c)*sqrt(r^2+x^(-2)*t^2)*
           ( 1+1*sqrt(1-16*a*(t)/sqrt(r^2+x^(-2)*t^2)/(x^2+2*c)^2 )  );

> K(F,x0,y0,r0,t0);
\end{verbatim}
The following program is optimized for the rotationally symmetric Cartan metric associated with the rotating Kepler Hamiltonian.
Here we use the same notation as in Section~\ref{sec:polar_coor}.
\begin{verbatim}
> K:=proc(F)
> local H,Hr,Ht,Hrr,Hrt,Htt,gi,detg,g,
>        Lr,Lt,Lsx,Lsq1x,Lsq2x,
>        G,Gr,Gt,Gar,Gat,Grre,Gtt,Gre,Gte,Hre,x0,r0,t0,
>        Garre,Garte,Grte,Gxte,
>        gie,detge,ge,Lsq1xe,Lsq2xe:
> 
> ## Hamiltonian given
> H:=F^2/2:
> 
> Hr:=diff(H,r):
> Ht:=diff(H,t):
> Hrr:=diff(Hr,r):
> Hrt:=diff(Hr,t):
> Htt:=diff(Ht,t):
> 
> gi[1,1]:=Hrr: gi[1,2]:=Hrt: gi[2,1]:=Hrt: gi[2,2]:=Htt:  
> 
> detg:=gi[1,1]*gi[2,2]-gi[1,2]^2:
> 
> ##obtain Finsler 'metric' g_{ij}: note that the dependence on the fiber 
> ##coordinates is not correct, since we haven't performed a Legendre 
> ##transformation
> g[1,1]:=1/detg*gi[2,2]: g[1,2]:=-1/detg*gi[1,2]: 
> g[2,1]:=-1/detg*gi[2,1]: g[2,2]:=1/detg*gi[1,1]:
> 
> 
> Lr:=r:
> Lt:=t:
> 
> Lsx:=diff(H,x): 
> Lsq1x:=diff(Hr,x): 
> Lsq2x:=diff(Ht,x): 
> 
> ##Formulas from Section 5
> G:= 1/2*( gi[1,1]*Lsx-Lr*(gi[1,1]*Lsq1x)-Lt*(gi[1,2]*Lsq1x) ):
> H:=1/2*(gi[1,2]*Lsx-Lt*(gi[1,2]*Lsq2x)-Lr*(gi[1,1]*Lsq2x) ):
> 
> ##avoid double computations
> Gar:=diff(G,r): Gat:=diff(G,t):
> 
> Gr:=Gar*g[1,1]+Gat*g[2,1]:
> Gt:=Gar*g[1,2]+Gat*g[2,2]:
> 
> ##evaluate quickly
> x0:=x: r0:=0: t0:=x:
> ge[1,1]:=subs(x=x0,r=r0,t=t0,g[1,1]): ge[1,2]:=subs(x=x0,r=r0,t=t0,g[1,2]): 
> ge[2,1]:=subs(x=x0,r=r0,t=t0,g[2,1]): ge[2,2]:=subs(x=x0,r=r0,t=t0,g[2,2]):
> gie[1,1]:=subs(x=x0,r=r0,t=t0,gi[1,1]): gie[1,2]:=subs(x=x0,r=r0,t=t0,gi[1,2]): 
> gie[2,1]:=subs(x=x0,r=r0,t=t0,gi[2,1]): gie[2,2]:=subs(x=x0,r=r0,t=t0,gi[2,2]):
> detge:=subs(x=x0,r=r0,t=t0,detg):
> Lsq1xe:=subs(x=x0,r=r0,t=t0,Lsq1x):
> Lsq2xe:=subs(x=x0,r=r0,t=t0,Lsq2x):
> 
> ##avoid double computations
> Garre:=subs(x=x0,r=r0,t=t0,diff(Gr,r) ): 
> Garte:=subs(x=x0,r=r0,t=t0, diff(Gr,t) ): 
> 
> ##evaluate when no more differentation is needed.
> Grre:=Garre*ge[1,1]+Garte*ge[2,1] :
> Grte:=Garre*ge[1,2]+Garte*ge[2,2] :
> 
> Gxte:=subs(x=x0,r=r0,t=t0, diff(Gt,x) )
>         +subs(x=x0,r=r0,t=t0, diff(Gt,r) )*(-ge[1,1]*Lsq1xe-ge[1,2]*Lsq2xe) 
>         +subs(x=x0,r=r0,t=t0, diff(Gt,t) )*(-ge[1,2]*Lsq1xe-ge[2,2]*Lsq2xe):
> 
> Hre:=subs(x=x0,r=r0,t=t0,diff(H,r) )*ge[1,1]
>       +subs(x=x0,r=r0,t=t0,diff(H,t) )*ge[2,1] :
> 
> Gre:=subs(x=x0,r=r0,t=t0,Gr):
> Gte:=subs(x=x0,r=r0,t=t0,Gt):
> 
> K:= subs(x=x0,r=r0,t=t0, 
>      ( (Gxte)*(gie[2,1]*r+gie[2,2]*t)+2*G*Grre+2*H*Grte-Gre*Gre-Gte*Hre )
>      /( (gie[2,1]*r+gie[2,2]*t)*Lt)  ):
> 
> end proc;

> F:=1/4*(x^2+2*c)*sqrt(r^2+x^(-2)*t^2)*
           ( 1+1*sqrt(1-16*a*(t)/sqrt(r^2+x^(-2)*t^2)/(x^2+2*c)^2 )  );

> GK:=K(F);
> curv:=simplify(GK,assume=positive);
> plot({subs(a=1,c=2,curv)},x=-10..10);
\end{verbatim}


\end{document}